\documentclass[11pt,a4paper]{amsart}

\usepackage[margin=3cm]{geometry}

\usepackage{amsmath,amsthm,amsfonts,amssymb,mathtools}
\usepackage{mathrsfs}
\usepackage{stmaryrd} 
\usepackage{esint}
\usepackage{latexsym}

\usepackage{graphicx}
\usepackage[dvipsnames]{xcolor}
\DeclareGraphicsExtensions{.eps,.pdf,.jpeg,.png}

\usepackage[all]{xy}
\usepackage[new]{old-arrows}
\usepackage{pgf,tikz}
\usetikzlibrary{
  decorations.pathreplacing,
  decorations.markings,
  arrows,
  arrows.meta
}

\usepackage{array}
\usepackage{multirow}
\usepackage{blkarray,bigstrut}
\usepackage{nicematrix}
\usepackage{ytableau}

\usepackage{textcomp}
\usepackage{fancyhdr}
\usepackage{comment}
\usepackage{paralist}
\usepackage{enumerate}
\usepackage{enumitem}
\usepackage{dirtytalk}
\usepackage{url}

\usepackage[colorlinks=true,citecolor=cyan,backref=page]{hyperref}

\newcommand{\midarrow}{\tikz \draw[-triangle 90] (0,0) -- +(.1,0);}

\makeatletter
\renewcommand*\env@matrix[1][*\c@MaxMatrixCols c]{%
  \hskip -\arraycolsep
  \let\@ifnextchar\new@ifnextchar
  \array{#1}}
\makeatother

\newtheoremstyle{cleanplain}
  {10pt}      
  {10pt}      
  {\itshape}  
  {}          
  {\bfseries} 
  {.}         
  { }         
  {}

\newtheoremstyle{cleandefinition}
  {10pt}       
  {10pt}       
  {\normalfont} 
  {}           
  {\bfseries}  
  {.}          
  { }          
  {}

\theoremstyle{cleanplain}
\newtheorem{theorem}{Theorem}[section]
\newtheorem{corollary}[theorem]{Corollary}
\newtheorem{proposition}[theorem]{Proposition}
\newtheorem{lemma}[theorem]{Lemma}

\theoremstyle{cleandefinition}
\newtheorem{definition}[theorem]{Definition}
\newtheorem{remark}[theorem]{Remark}
\newtheorem{example}[theorem]{Example}

\author[N. Chapelier-Laget]{Nathan~Chapelier-Laget}

\title{$J$-ascent sets in parabolic quotients of Coxeter groups}

\begin{document}

\maketitle

\begin{abstract}
Let $(W,S)$ be a Coxeter system and let $J\subseteq S$. The right $J$-ascent set
$$
A_R^J(w)=\{s\in S\mid ws\in{}^JW,\ \ell(ws)=\ell(w)+1\}
$$
is a natural refinement of the classical ascent set adapted to the
parabolic quotient ${}^JW$. We establish a local transition formula
describing the behaviour of $J$-ascent sets under right multiplication
by a simple reflection. As a consequence, for every $w\in{}^JW$ and every
$s\in A_R^J(w)$, we prove that
$$
\Bigl||A_R^J(w)|-|A_R^J(ws)|\Bigr|
\leq \max\{1,\deg(s)-1\}.
$$
\end{abstract}


\section{Introduction}

Ascent and descent sets are among the most classical combinatorial objects studied in Coxeter groups. They describe the local structure of the weak order and appear naturally in the study of Bruhat order, Hecke algebras, Schubert calculus and many other subjects.

If $(W,S)$ is a Coxeter system and $w\in W$, the right ascent set
$$
A_R(w)=\{s\in S\mid \ell(ws)=\ell(w)+1\}
$$
records the directions in which one can move upward from $w$ in the right weak order.

\medskip

 Given $J\subseteq S$, the set ${}^JW$
is the set of minimal length representatives for the left cosets of the parabolic subgroup $W_J$. It inherits a rich combinatorial structure from the weak order on $W$.  However, the ordinary ascent set is not adapted to this restricted setting: if $w\in {}^JW$ and $s\in A_R(w)$, the element $ws$ may no longer belong to ${}^JW$.
This motivates the following object.  For $w\in {}^JW$, we define the right $J$-ascent set by
\[
A_R^J(w)=\{s\in S\mid ws\in {}^JW,\ \ell(ws)=\ell(w)+1\}.
\]

Related interactions between descent sets, reflections, and
projections onto parabolic quotients have recently been studied by
Sentinelli~\cite{Sentinelli2025}. The present approach instead
considers the ascent directions that remain inside a fixed parabolic
quotient and their behaviour under right multiplication by a simple
reflection.

\subsection{Objectives of the paper}
We first establish a local transition formula for $J$-ascent sets in arbitrary Coxeter systems.
Our first central result (see Theorem \ref{J-ascent set}) states that if $w< ws$ and both $w$ and $ws$ lie in ${}^JW$, then
$$
A_R^J(ws)= [A_R^J(w),s]\sqcup \Delta_R^J(w,s),
$$
where the notation is introduced in (\ref{crochet}) and Definition \ref{def J-ascent set}, respectively. 
This formula is the parabolic analogue of the corresponding transition rule for usual ascent sets (see Proposition \ref{ascent set}).

Our second result, Theorem~\ref{corollary structural transition}, bounds the variation in the cardinality of the $J$-ascent set under right multiplication by $s$ in terms of the degree of $s$ in the Coxeter graph:
$$
\Bigl||A_R^J(w)|-|A_R^J(ws)|\Bigr|
\leq
\max\{1,\deg(s)-1\}.
$$

 In particular, Corollary~\ref{corollary degree at most two} applies
to Coxeter groups of type $A_n$ and $\widetilde A_n$, which occur in
the Kazhdan--Lusztig setting considered in
 \cite{chapelier2024asymptotic}. 

\medskip

\section{Coxeter groups and parabolic quotients}\label{section rappels}

Let $(W,S)$ be a Coxeter system, with $|S| < \infty$, with length function $\ell$.  Let $\Phi^+$ be the generalized root system attached to  $(W,S)$.  By setting $\Phi^- = -\Phi^+$ we have $\Phi = \Phi^+ \sqcup \Phi^-$.  Let $T$ be the set of reflections of $(W,S)$, that is $T = \{wsw^{-1}~|~w \in W,~s\in S\}$.  $T$ is naturally in bijection with $\Phi^+$ and the  root associated to $t \in T$ is denoted by $\alpha_t$. 

For a subset $X$ of $W$ and $y \in W$ we denote 
\begin{equation}\label{crochet}
\displaystyle\left[X,y\displaystyle\right] := \{x \in X~|~xy = yx,~x \neq y\},
\end{equation}
and for $s \in S$, we set
\begin{equation}\label{adjacency set}
\operatorname{Adj}(s)
:=
\{r\in S \mid rs\neq sr,~r \neq s\} \quad \text{and}\quad \deg(s):= |\operatorname{Adj}(s)|.
\end{equation}

\medskip

For $w \in W$,  the right inversion set of $w$ is defined by 
$$
N_R(w) = \{t \in T~|~\ell(wt) < \ell(w)\}.
$$
An alternative description is given by 
$$
N_R(w) = \{t \in T~|~w(\alpha_t) \in \Phi^-\}.
$$

\bigskip

\subsection{Ascent sets, descent sets and right weak order}
Let $w \in W$. The right descent set of $w$ is $D_R(w) = \{s \in S~|~\ell(ws) = \ell(w) - 1\}$ and the right ascent set of $w$ is $A_R(w) = \{s \in S~|~\ell(ws) = \ell(w) + 1\}$.  Similarly, the left descent set of $w$ is $D_L(w) = \{s \in S~|~\ell(sw) = \ell(w) - 1\}$ and the left ascent set of $w$ is $A_L(w) = \{s \in S~|~\ell(sw) = \ell(w) + 1\}$.

We denote by $\leq_R$ the right weak order on $W$, that is $x \leq_R y$ if a reduced expression of $y$ has $x$ as prefix. If there is no confusion we will write $x \leq y$ instead of $x \leq_R y$ (respectively $x < y$ instead of $x <_R y$). We denote by $\Omega_W^R$ the graph (i.e. the Hasse diagram) of the right weak order on $W$. If $X \subset W$, we denote by $\Omega_W^R(X)$ the induced subgraph on $X$.

\bigskip

\subsection{Parabolic quotients}\label{subsection coset}

Let $I,J \subseteq S$. We denote by
$$
{}^JW = \{w \in W~|~J \subseteq A_L(w)\}
      = \{w \in W~|~\ell(sw)>\ell(w)\ \forall s\in J\}
$$
the set of minimal length representatives of the left cosets of $W_J$.
Similarly, we denote by
$$
W^I = \{w \in W~|~I \subseteq A_R(w)\}
     = \{w \in W~|~\ell(ws)>\ell(w)\ \forall s\in I\}
$$
the set of minimal length representatives of the right cosets of $W_I$.

When $W$ is finite, the set ${}^JW$ is an interval in the right weak order. In particular, it has a unique element of maximal length, denoted ${}^Jz=w_{0,J}w_0$, where $w_0$ is the longest element of $W$ and $w_{0,J}$ is the longest element of $W_J$  (see \cite[Section 2.2]{Geck00}).

Every element $w\in W$ admits unique decompositions
$
w=w_J\,{}^Jw
$
with
$
w_J\in W_J, {}^Jw\in {}^JW, 
$
and
$
w=w^I w_I,
$
with
$
w_I\in W_I, w^I\in W^I.
$
Moreover,
\begin{equation}\label{equality para lengths}
\ell(w)=\ell(w_J)+\ell({}^Jw)=\ell(w^I)+\ell(w_I).
\end{equation}
Here ${}^Jw$ is the unique element of minimal length in the left coset $W_Jw$, and $w^I$ is the unique element of minimal length in the right coset $wW_I$.

We also set
$$
{}^JW^I := {}^JW\cap W^I.
$$

Let $w \in W$. It is well known that the following map defines a poset isomorphism (which follows for example from \cite[Prop. 2.4.4, Prop. 3.1.6]{bjorner2005combinatorics})
$$
\begin{aligned}
(W_I,\leq_R)&\longrightarrow (wW_I,\leq_R),\\
u&\longmapsto w^{I}u.
\end{aligned}
$$

Therefore, if $xW_I=yW_I$, then $x^{I}=y^{I}$, and writing $x=x^{I}x_I$ and $y=x^{I}y_I$, we have
\begin{equation}\label{equ weak para}
    x\leq_R y
\quad\Longleftrightarrow\quad
x_I\leq_R y_I.
\end{equation}

\medskip

\subsection{A stability property}
The following lemma will be used to control when an edge of the right weak order remains inside the parabolic quotient ${}^JW$. Part $(a)$ follows from Deodhar's Lemma (see \cite[Lemma 2.1.2]{Geck00}), which is based on the fact that if an element $w\in W$ has a left ascent $r$ and a right ascent $s$, then either $\ell(rws)=\ell(w)+2$ or $rws=w$. 

\begin{lemma}[{\cite[Lemma 1.13]{CHIN20}}]\label{lemma Chin20}
 Let $w, u \in W$, $s \in S$ and $I, J \subseteq S$.
\begin{itemize}
\item[(a)] If $w\in {}^JW$ we have the following three possibilities:
\begin{itemize}
\item[(i)] $s \in A_R(w)$ and $ws \in {}^JW$;
\item[(ii)] $s \in A_R(w)$ and $ws \notin {}^JW$, in which case $ws = rw$ for some $r \in J$;
\item[(iii)] $s \in D_R(w)$ and $ws \in  {}^JW$.
\end{itemize}
\item[(b)] If $w \in {}^JW\cap W^I$ and if $u \in W_I$ then $wu \in {}^J W$ if and only if $u \in {}^{I\cap w^{-1}Jw}W$.
\end{itemize}
\end{lemma}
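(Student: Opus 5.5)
For part (a), I will use Deodhar's Lemma. Let $w\in{}^JW$. First suppose $s\in D_R(w)$. We must show $ws\in{}^JW$. If $r\in J$ were a left descent of $ws$, then $r(ws)<ws<w$, so $\ell(rws)=\ell(w)-2$. Write $ws=r\,v$ with $\ell(v)=\ell(ws)-1$. Then $w=rvs$. Since $\ell(w)=\ell(v)+2$, this is a reduced product, and hence $r\in D_L(w)$. That contradicts $w\in{}^JW$, so (iii) holds.

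Now suppose $s\in A_R(w)$ and $ws\notin{}^JW$. Then some $r\in J$ satisfies $\ell(rws)<\ell(ws)$. Since $r\in A_L(w)$ and $s\in A_R(w)$, the quoted dichotomy gives either $\ell(rws)=\ell(w)+2$ or $rws=w$. The first option would force $\ell(rws)>\ell(ws)$, which is false. Hence $rws=w$, that is $ws=rw$, which is (ii). In the remaining case $ws\in{}^JW$ with $s$ an ascent, which is (i). The only point needing care is the dichotomy itself. I would take it as given from \cite[Lemma 2.1.2]{Geck00}; if a self-contained argument were needed, I would derive it from the exchange condition applied to a reduced word for $rw$ followed by $s$.

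For part (b), let $w\in{}^JW\cap W^I$ and $u\in W_I$. Since $w\in W^I$, we have $\ell(wu)=\ell(w)+\ell(u)$. The element $wu$ lies in ${}^JW$ iff no $r\in J$ is a left descent of $wu$. Via roots, this says $(wu)^{-1}(\alpha_r)\in\Phi^+$ for all $r\in J$, that is $u^{-1}(w^{-1}\alpha_r)\in\Phi^+$. Because $w\in{}^JW$, each $\beta_r:=w^{-1}\alpha_r$ is a positive root.

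I then split into two cases. If $\beta_r$ is not in the root subsystem $\Phi_I$, then $u\in W_I$ keeps it positive, by the standard fact that $W_I$ permutes $\Phi^+\setminus\Phi_I$; so the condition is automatic. If $\beta_r\in\Phi_I$, I invoke Kilmoyer's/Deodhar's lemma for $w\in{}^JW^I$: $W_J\cap wW_Iw^{-1}=W_{J\cap wIw^{-1}}$. Applied to this case, it gives that $w^{-1}rw$ is a simple reflection $s'\in I\cap w^{-1}Jw$, so that $\beta_r=\alpha_{s'}$. Conversely, every $s'\in I\cap w^{-1}Jw$ arises in this way, with $\beta_r=\alpha_{s'}$.

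Combining the two cases, $wu\in{}^JW$ iff $u^{-1}(\alpha_{s'})>0$ for all $s'\in I\cap w^{-1}Jw$. That is exactly the statement $u\in{}^{I\cap w^{-1}Jw}W$.

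The main obstacle is the claim, in the second case, that $\beta_r\in\Phi_I$ forces $\beta_r$ to be simple. I would prove it directly. Suppose $w^{-1}rw\in W_I$ and let $v=w^{-1}rw$. Then $rw=wv$, and since $w\in W^I$, $\ell(rw)=\ell(w)+\ell(v)$. Since $w\in{}^JW$ and $r\in J$, also $\ell(rw)=\ell(w)+1$. Comparing the two expressions gives $\ell(v)=1$, so $v\in I$ and $\beta_r$ is the corresponding simple root.
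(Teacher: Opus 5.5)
Your proof is correct. For part (a) it follows the route the paper indicates. The paper does not prove this lemma; it quotes it from \cite{CHIN20} and notes only that (a) comes from Deodhar's dichotomy ($\ell(rws)=\ell(w)+2$ or $rws=w$ when $r\in A_L(w)$ and $s\in A_R(w)$), which is what you use for (ii). Your treatment of (iii) is the same ``left descents propagate upward in the right weak order'' argument that the paper later records as Proposition~\ref{stability}(2).

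For part (b) the paper gives no argument, so your root-theoretic proof is an independent, self-contained route. You translate $wu\in{}^JW$ into positivity of $u^{-1}(w^{-1}\alpha_r)$ for all $r\in J$. You then discard the roots $w^{-1}\alpha_r\notin\Phi_I$ using $W_I$-stability of $\Phi^+\setminus\Phi_I$. Finally, you identify the remaining roots with the simple roots $\alpha_{s'}$, $s'\in I\cap w^{-1}Jw$, via the length comparison $\ell(w)+1=\ell(rw)=\ell(wv)=\ell(w)+\ell(v)$. That comparison already proves what you need, so the appeal to Kilmoyer's theorem $W_J\cap wW_Iw^{-1}=W_{J\cap wIw^{-1}}$ can be dropped. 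A proof through Kilmoyer's theorem and the factorization of the double coset $W_JwW_I$ would be more structural. Yours needs only basic facts about roots and the additivity $\ell(wv)=\ell(w)+\ell(v)$ for $w\in W^I$, $v\in W_I$.

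The one step you leave implicit is that $w^{-1}\alpha_r\in\Phi_I$ forces $w^{-1}rw\in W_I$. It is immediate if you set $\Phi_I:=W_I\cdot\{\alpha_t\mid t\in I\}$: if $w^{-1}\alpha_r=x\alpha_t$, then $w^{-1}rw=xtx^{-1}$. With this definition, the $W_I$-stability of $\Phi^+\setminus\Phi_I$ is a short induction on $\ell(u)$, using that each $t\in I$ permutes $\Phi^+\setminus\{\alpha_t\}$.
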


\bigskip

The following proposition, which stems from Lemma \ref{lemma Chin20}, shall be used later.

\bigskip

\begin{proposition}\label{stability}
Let $w \in W$.  
\begin{itemize}
\item[(1)] If $w \in {}^JW^I$ and $ws \in {}^J W$ for any $s \in I$, then $wW_I \subset {}^J W$.
\item[(2)] If $w \notin {}^JW$ then for any $z \in W$ such that $w \leq z$ we have $ \left[w,  z \right] \cap  {}^JW = \emptyset$.
\item[(3)] If $w\in{}^JW$ and $s\in A_R(w)$ are such that
$ws\notin{}^JW$, then, for every $z\in W$ satisfying $ws\leq z$,
we have $[ws,z]\cap{}^JW=\emptyset$. In particular, if
$I=A_R(ws)$, then $wsW_I\cap{}^JW=\emptyset$.
\end{itemize}
\end{proposition}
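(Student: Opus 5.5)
We prove the three parts separately. The main tools are Lemma~\ref{lemma Chin20} and the fact that ${}^JW=\{w\mid J\subseteq A_L(w)\}$ is closed downward in the right weak order. That closure is itself a consequence of Lemma~\ref{lemma Chin20}(a). Case (iii) shows that if $x\in{}^JW$ and $s\in D_R(x)$, then $xs\in{}^JW$. Iterating this along a reduced expression shows that every prefix of an element of ${}^JW$ lies in ${}^JW$. In the notation of Proposition~\ref{stability}, $[w,z]$ is the interval in right weak order (not the commutation bracket (\ref{crochet})).

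\emph{Part (1).} Let $u\in W_I$ and argue by induction on $\ell(u)$, using Lemma~\ref{lemma Chin20}(b). Since $w\in{}^JW^I$, that lemma says $wu\in{}^JW$ if and only if $u\in{}^{K}W$, where $K=I\cap w^{-1}Jw$. We claim $K=\emptyset$, which makes the condition vacuous.
\begin{itemize}
\item Suppose $s\in K$. Then $s\in I$ and $wsw^{-1}=r\in J$, so $ws=rw$.
\item Since $w\in W^I$, we have $\ell(ws)=\ell(w)+1$, hence $\ell(rw)>\ell(w)$. This is consistent with $w\in{}^JW$, so no contradiction arises yet.
\item However, $r(ws)=w$ has length smaller than $\ell(ws)$, so $r\in D_L(ws)$ and therefore $ws\notin{}^JW$. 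This contradicts the hypothesis $ws\in{}^JW$.
\end{itemize}
Hence $K=\emptyset$, so ${}^{K}W=W$ and $wu\in{}^JW$ for every $u\in W_I$. Thus $wW_I\subset{}^JW$.

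\emph{Part (2).} Let $x\in[w,z]$, so $w\le_R x$ and $w$ is a prefix of $x$. If $x$ were in ${}^JW$, downward closure would force $w\in{}^JW$, a contradiction. Hence $[w,z]\cap{}^JW=\emptyset$.

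\emph{Part (3).} The first claim is Part (2) applied to $ws$, which is not in ${}^JW$ by hypothesis. For the second claim, set $I=A_R(ws)$ and decompose any $x\in wsW_I$ as $x=(ws)^I x_I$. The obstacle is that $ws$ need not be the minimal element of its coset, so $x$ need not lie above $ws$ in weak order. We handle this as follows.
\begin{itemize}
\item Since $I=A_R(ws)$, the element $ws$ has every $t\in I$ as a right ascent, so $ws\in W^I$.
\item Therefore $(ws)^I=ws$, and every $x\in wsW_I$ has the form $x=ws\cdot u$ with $u\in W_I$ and lengths adding by (\ref{equality para lengths}).
\item Hence $ws\le_R x$, i.e. $x$ lies above $ws$ in weak order, which is what (\ref{equ weak para}) encodes for the coset minimum.
\end{itemize}
Part (2) then gives $x\notin{}^JW$, so $wsW_I\cap{}^JW=\emptyset$.

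The steps needing the most care are establishing $K=\emptyset$ in Part (1) and showing $ws\in W^I$ in Part (3).
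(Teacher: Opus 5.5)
Your proof is correct. Parts (1) and (3) follow the paper's proof. For (1), both reduce to showing $I\cap w^{-1}Jw=\emptyset$ and then invoke Lemma~\ref{lemma Chin20}(b). You get the emptiness by a direct contradiction ($ws=rw$ forces $r\in D_L(ws)$), whereas the paper notes that $s\in{}^{K}W$ forces $s\notin K$. Incidentally, the induction on $\ell(u)$ you announce in (1) is never used. For (3), both obtain the result from (2) once $ws\in W^I$ is observed.

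The real difference is in (2). The paper argues directly: if $x\in J$ is a left descent of $w$ and $u=wv$ with $\ell(u)=\ell(w)+\ell(v)$, then $\ell(xu)\le\ell(xw)+\ell(v)<\ell(u)$, so $x$ remains a left descent of $u$. You instead show that ${}^JW$ is a lower set for $\le_R$ by iterating Lemma~\ref{lemma Chin20}(a)(iii) along a reduced expression. Both are valid. The paper's route needs only subadditivity of length, while yours leans on the cited lemma (and hence on Deodhar's lemma) for what is really an elementary estimate. Your version does have the merit of stating the downward-closure property explicitly, and that property is what is used later in the proof of Theorem~\ref{J-ascent set}.
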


\begin{proof}
(1) Let $x \in W_I$.  By Lemma \ref{lemma Chin20} we know that $wx \in {}^J W$ if and only if $x \in {}^{I\cap w^{-1}Jw}W$.  Thus, since $ws \in  {}^J W$ for any $s \in I$, it follows that $s \in  {}^{I\cap w^{-1}Jw}W$.  However,  since $s$ is of length 1 and since $s \in I$, it follows that $s \notin w^{-1}Jw$ and then $I\cap w^{-1}Jw = \emptyset$.  Therefore,  the equivalence $wx \in {}^J W$ if and only if $x \in {}^{I\cap w^{-1}Jw}W$ becomes $wx \in {}^J W$ if and only if $x \in {}^{\emptyset}W = W$,  which is true.  Hence the first point.

(2) Since $w \notin {}^JW$ there exists $x \in J$ such that $\ell(xw) = \ell(w) - 1$.  Let $u \in  \left[w,  z \right]$ with $u = wv$ and $\ell(u) = \ell(w) + \ell(v)$.  Since $x$ is a left descent of $w$ there exists a reduced expression of $w$ starting with $x$, say $w = xw'$ with $\ell(w') =\ell(w) - 1$.  Therefore, $\ell(xu) = \ell(w'v) \leq \ell(w') + \ell(v) = \ell(u) - 1$,  which shows that $x$ is also a left descent of $u$.  Thus $u \notin  {}^JW$ and the second point follows.  

(3) The first assertion follows directly from (2), applied to $ws$.
Moreover, if $I=A_R(ws)$, then $ws\in W^I$, and hence
$ws\leq wsx$ for every $x\in W_I$. The second assertion therefore
follows again from the point (2). \qedhere
\end{proof}

\bigskip

\begin{remark}
In Proposition \ref{stability} (1) we would hope that the statement could be generalized by:  If $w \in {}^J W$ and $ws \in {}^J W$ for any $s \in I$, then $wW_I \subset {}^J W$.  However this is no longer true without the assumption  $w \in {}^JW^I$.  For example in $W = A_4$ with $J = \{s_1, s_2\}$,  $I = \{s_2, s_3\}$ and $w  = s_4s_3$,  we have $ws_3$ and $ws_2$ that belong to ${}^J W$ but $ws_3s_2s_3$ does not.
\end{remark}

\bigskip

\begin{example}
Let $W=A_3$ and $J=\{s_1\}$. Take $w=s_3$ and $s=s_1$. Then
$w\in{}^JW$, whereas $ws=s_3s_1\notin{}^JW$. Moreover, if
$z=s_1s_2s_3s_2$, then $z=s_3s_1s_2s_3$, and hence $ws\leq z$.
Proposition~\ref{stability} (3) therefore gives
$[ws,z]\cap{}^JW=\emptyset$, as illustrated in the following figure.
\end{example}

\bigskip

\begin{figure}[h!]
\begin{center}
\begin{tikzpicture}

\node at (0,9) (123121) {$123121$};

\node at (2,7.5) (12321) {$12321$};
\node at (0,7.5) (23121) {$23121$};
\node at (-2,7.5) (12312) {$12312$};

\node at (4,6) (3121) {$3121$};
\node at (2,6) (2321) {$2321$};
\node at (0,6) (1232) {$1232$};
\node at (-2,6) (2312) {$2312$};
\node at (-4,6) (1231) {$1231$};

\node at (5,4.5) (321) {$321$};
\node at (3,4.5) (232) {$232$};
\node at (1,4.5) (312) {$312$};
\node at (-1,4.5) (231) {$231$};
\node at (-3,4.5) (121) {$121$};
\node at (-5,4.5) (123) {$123$};

\node at (4,3) (32) {$32$};
\node at (2,3) (31) {$31$};
\node at (0,3) (23) {$23$};
\node at (-2,3) (21) {$21$};
\node at (-4,3) (12) {$12$};

\node at (2,1.5) (3) {$3$};
\node at (0,1.5) (2) {$2$};
\node at (-2,1.5) (1) {$1$};

\node at (0,0) (0) {$e$};

 \draw[line width=0.4mm,-{Stealth[scale=0.7]}]  (12321) to (123121);
\draw[line width=0.4mm,-{Stealth[scale=0.7]}]  (23121) to (123121);
\draw[line width=0.4mm,-{Stealth[scale=0.7]}]  (12312) to  (123121);

 \draw[line width=0.4mm ,-{Stealth[scale=0.7]}]  (3121) to (12321);
 \draw[line width=0.4mm ,-{Stealth[scale=0.7]},Dandelion]  (2321) to (23121);
 \draw[line width=0.4mm ,-{Stealth[scale=0.7]}]  (1232) to (12321);
 \draw[line width=0.4mm,-{Stealth[scale=0.7]},Dandelion]  (2312) to (23121);
\draw[line width=0.4mm,-{Stealth[scale=0.7]}]  (2312) to (12312);
\draw[line width=0.4mm,-{Stealth[scale=0.7]}]  (1231) to  (12312);

 \draw[line width=0.4mm ,-{Stealth[scale=0.7]}]  (321) to (3121);
 \draw[line width=0.4mm ,-{Stealth[scale=0.7]},Dandelion]  (321) to (2321);
 \draw[line width=0.4mm ,-{Stealth[scale=0.7]},Dandelion]  (232) to (2321);
  \draw[line width=0.4mm ,-{Stealth[scale=0.7]}]  (312) to (3121);
 \draw[line width=0.4mm ,-{Stealth[scale=0.7]}]  (312) to (1232);
\draw[line width=0.4mm,-{Stealth[scale=0.7]},Dandelion]  (231) to (2312);
\draw[line width=0.4mm,-{Stealth[scale=0.7]}]  (121) to (1231);
\draw[line width=0.4mm,-{Stealth[scale=0.7]}]  (123) to  (1232);
\draw[line width=0.4mm,-{Stealth[scale=0.7]}]  (123) to  (1231);

 \draw[line width=0.4mm ,-{Stealth[scale=0.7]},Dandelion]  (32) to (321);
 \draw[line width=0.4mm ,-{Stealth[scale=0.7]},Dandelion]  (32) to (232);
 \draw[line width=0.4mm ,-{Stealth[scale=0.7]}]  (31) to (312);
 \draw[line width=0.4mm ,-{Stealth[scale=0.7]},Dandelion]  (23) to (232);
\draw[line width=0.4mm,-{Stealth[scale=0.7]},Dandelion]  (23) to (231);
\draw[line width=0.4mm,-{Stealth[scale=0.7]},Dandelion]  (21) to (231);
\draw[line width=0.4mm,-{Stealth[scale=0.7]}]  (21) to (121);
\draw[line width=0.4mm,-{Stealth[scale=0.7]}]  (12) to (121);
\draw[line width=0.4mm,-{Stealth[scale=0.7]}]  (12) to  (123);

 \draw[line width=0.4mm ,-{Stealth[scale=0.7]},Dandelion]  (3) to (32);
\draw[line width=0.4mm,-{Stealth[scale=0.7]}]  (3) to (31);
\draw[line width=0.4mm,-{Stealth[scale=0.7]},Dandelion]  (2) to (23);
\draw[line width=0.4mm,-{Stealth[scale=0.7]},Dandelion]  (2) to (21);
\draw[line width=0.4mm,-{Stealth[scale=0.7]}]  (1) to (31);
\draw[line width=0.4mm,-{Stealth[scale=0.7]}]  (1) to  (12);

\draw[line width=0.4mm,-{Stealth[scale=0.7]}]  (0) to (1);
\draw[line width=0.4mm,-{Stealth[scale=0.7]},Dandelion] (0) to (2);
\draw [line width=0.4mm,-{Stealth[scale=0.7]},Dandelion]  (0) to (3);
\end{tikzpicture}
\end{center}
\caption{Hasse diagram of the right weak order for $W=A_3$. The yellow part is the parabolic quotient ${}^JW$ with $J = \{s_1\}$.}
\end{figure}

\bigskip

\section{Ascent sets}\label{section ascent sets}

Throughout this section, $(W,S)$ denotes an arbitrary Coxeter system, with $S$ finite.

\subsection{A local formula for ascent sets} In this section we study the ascent set of an element $ws$ with $w \in W$ and $s \in S$ such that $w < ws$ (for the right weak order).  The goal is to express $A_R(ws)$ in terms of $A_R(w)$. This is done in Proposition \ref{ascent set}.

\medskip

\begin{definition}
Let $w \in W$ and $s \in S$. We define $\Delta_R(w,s)$ by 
\begin{align*}
    \Delta_R(w,s) &:=  \{r \in S~|~\ell(srs)=3,~srs \notin N_R(w)\} 
                   =  \{r \in S~|~rs \neq sr,~srs \notin N_R(w)\}.
\end{align*}
In particular we see that 
\begin{equation}\label{inclusion Delta in Adj}
    \Delta_R(w,s) \subseteq \operatorname{Adj}(s).
\end{equation}

\end{definition}

\begin{proposition}\label{ascent set}
Let $w \in W$ and $s \in S$ such that $w < ws$.  We have
$$
 A_R(ws) = \displaystyle\left[ A_R(w),s \right] \sqcup \Delta_R(w,s).
 $$
\end{proposition}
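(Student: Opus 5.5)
Set $v=ws$, so $\ell(v)=\ell(w)+1$ and $s\in D_R(v)$. We will decide, for each $r\in S$, whether $r\in A_R(v)$ by looking at the sign of $v(\alpha_r)$. By the alternative description of inversion sets, $r\in A_R(v)$ if and only if $r\notin N_R(v)$, i.e.\ $v(\alpha_r)=w(s(\alpha_r))\in\Phi^+$. We also use the standard fact that $N_R(ws)=s\,N_R(w)\,s\sqcup\{s\}$ when $w<ws$. Hence, for $r\neq s$, we have $r\in A_R(ws)$ if and only if $srs\notin N_R(w)$. The element $r=s$ is excluded from both sides: it is a descent of $ws$, it is not in $[A_R(w),s]$ because $x\neq s$ is required, and it is not in $\Delta_R(w,s)$ because $sss=s$ has length $1$.

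We then split according to whether $r$ commutes with $s$.

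\emph{Case 1: $rs=sr$, $r\neq s$.} Then $srs=r$, so $r\in A_R(ws)$ if and only if $r\notin N_R(w)$, i.e.\ $r\in A_R(w)$. This says exactly that $r\in[A_R(w),s]$.

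\emph{Case 2: $rs\neq sr$.} Then $srs$ is the reflection whose root is $s(\alpha_r)\in\Phi^+$. Moreover $\ell(srs)=3$, since $s\neq r$ and $srs$ is a reduced word whenever $m(r,s)\geq 3$. The criterion $srs\notin N_R(w)$ is then precisely the defining condition of $\Delta_R(w,s)$. In this case $r\notin[A_R(w),s]$ automatically, because $r$ does not commute with $s$.

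The two cases partition $S\setminus\{s\}$, and $[A_R(w),s]$ consists of commuting elements while $\Delta_R(w,s)\subseteq\operatorname{Adj}(s)$ by (\ref{inclusion Delta in Adj}). This gives disjointness and the equality. We also record the equivalence of the two descriptions of $\Delta_R(w,s)$: $\ell(srs)=3$ holds if and only if $rs\neq sr$ (with $r\neq s$).

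The only step needing care is the identity $N_R(ws)=sN_R(w)s\sqcup\{s\}$. It follows from $ws(\alpha_t)=w(s\alpha_t)$ together with the facts that $s$ permutes $\Phi^+\setminus\{\alpha_s\}$ and that $w(\alpha_s)\in\Phi^+$. Alternatively, for Case 2 one can argue directly that $ws(\alpha_r)=w(s\alpha_r)$ with $s\alpha_r=\alpha_r+c\,\alpha_s$, $c>0$, a positive root associated to $srs$.
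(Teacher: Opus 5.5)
Your proof is correct, but it takes a different route from the paper.

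\emph{Your route.} Everything rests on the conjugation identity $N_R(ws)=sN_R(w)s\sqcup\{s\}$. This holds because $w(\alpha_s)\in\Phi^+$ and $s$ permutes $\Phi^+\setminus\{\alpha_s\}$. It gives the uniform criterion $r\in A_R(ws)\iff srs\notin N_R(w)$ for every $r\in S\setminus\{s\}$. The proposition is then just this criterion split according to whether $r$ commutes with $s$ (so $srs=r$) or not (so $\ell(srs)=3$). Both inclusions and the disjointness come out at once.

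\emph{The paper's route.} It does not pass through this identity. It argues by length arithmetic inside the rank-two coset $wW_{\{r,s\}}$ and proves the two inclusions separately. For $\subseteq$, it notes that $\ell(wsrs)=\ell(wsr)\pm 1>\ell(w)$. It then excludes the commuting non-ascent case by inspecting the square configuration of the coset. For $\supseteq$ in the non-commuting case, it observes that $r,s\in D_R(ws)$ would make $ws$ the top of a finite dihedral coset. That forces $\ell(wsrs)=\ell(w)-1$, contradicting $srs\notin N_R(w)$.

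\emph{What each buys.} Your version is shorter and shows that the formula is merely a rewriting of $A_R(ws)=\{r\in S\setminus\{s\}\mid srs\notin N_R(w)\}$. It does rely on the root-theoretic description of inversion sets. The paper's version stays within weak-order combinatorics: lengths and pictures of dihedral cosets. This is the same local toolkit the paper reuses, together with Proposition~\ref{stability}, in the proof of Theorem~\ref{J-ascent set}, where membership in ${}^JW$ has to be followed along the edges of $wW_I$.
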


\begin{proof}
Let $r \in S$ and $I = \{r,s\}$.  

\begin{itemize}
    \item Let us show the inclusion $\subset$. Assume that $r \in A_R(ws)$. 
    
    We have then $w < ws < wsr$ and it follows that $\ell(wsrs) = \ell(wsr)\pm 1$.  Thus $\ell(wsrs)$ is equal to $\ell(w) + 3$ or $\ell(w) + 1$.  This shows that $\ell(wsrs) > \ell(w)$ and then $srs \notin N_R(w)$. 

    Now, assume that $r \notin \displaystyle\left[ A_R(w),s \right]$. Therefore we have three cases: (i) $r \in A_R(w)$ and $rs \neq sr$; (ii) $r \notin A_R(w)$ and $rs = sr$; (iii) $r \notin A_R(w)$ and $rs \neq sr$. Since we already know that $srs \notin N_R(w)$, we only need to show that $rs \neq sr$, that is we must discard the case (ii). If the case (ii) stands, then we have the following configuration for the graph $\Omega_W^R(wW_I)$.

\begin{center}
    \begin{tikzpicture}
\begin{scope}[very thick, every node/.style={sloped,allow upside down}]
\node at (0,3) (9) {};

\node at (-1, 2) (7){} ;
\node at (1, 2)(6) {$w$} ;

\node at (0,1) (1) {};

\draw (7) -- node {\midarrow} (9);
\draw (6) -- node {\midarrow} (9);

\draw (1) -- node {\midarrow} (7);
\draw (1) -- node {\midarrow} (6);

\node at (0.8,2.6)  {$\textcolor{red}{s}$};
\node at (0.8,1.4)  {$\textcolor{red}{r}$};

\node at (-0.8,2.6)  {$\textcolor{red}{r}$};
\node at (-0.8,1.4)  {$\textcolor{red}{s}$};

\end{scope}
\end{tikzpicture}
\end{center}

But then, we have several ways to see a problem with this configuration: the first one being that $r$ is a descent of $ws$, a contradiction with the fact that $r \in A_R(ws)$, the second one being that $wsrs = wr < w$ and then $srs \in N_R(w)$, which is also false. 
This proves the first inclusion.

    \medskip

    \item We now show the other inclusion, for which we have two cases to consider. 
    \begin{itemize}
        \item[(i)] Assume that $m_{rs} = 2$ and let $r \in \left[ A_R(w),s \right]$. Therefore, the configuration of $\Omega_{W}^R(wW_I)$ is as follows

        \begin{center}
    \begin{tikzpicture}
\begin{scope}[very thick, every node/.style={sloped,allow upside down}]
\node at (0,3) (9) {};

\node at (-1, 2) (7){} ;
\node at (1, 2)(6) {} ;

\node at (0,1) (1) {$w$};

\draw (7) -- node {\midarrow} (9);
\draw (6) -- node {\midarrow} (9);

\draw (1) -- node {\midarrow} (7);
\draw (1) -- node {\midarrow} (6);

\node at (0.8,2.6)  {$\textcolor{red}{s}$};
\node at (0.8,1.4)  {$\textcolor{red}{r}$};

\node at (-0.8,2.6)  {$\textcolor{red}{r}$};
\node at (-0.8,1.4)  {$\textcolor{red}{s}$};

\end{scope}
\end{tikzpicture}
\end{center}
          and this shows that $r \in A_R(ws)$, hence $\left[ A_R(w),s \right] \subseteq  A_R(ws)$.  

        \item[(ii)] Assume now that $m_{rs} \geq 3$ and let $r \in  \Delta_R(w,s)= \{r \in S~|~\ell(srs)=3,~srs \notin N_R(w)\}$.  Since $w<ws$, we have $s\in D_R(ws)$.
Therefore, if $r\notin A_R(ws)$, then $r,s\in D_R(ws)$, which
implies that $W_I$ is finite and that $ws$ is the element of maximal
length in the dihedral coset $wW_I$. But then,  $wsrs<wsr<ws$, implying that $\ell(wsrs) = \ell(ws)-2$. However, we have $\ell(ws) = \ell(w)+1$, which implies that $\ell(wsrs) = \ell(w)-1$. A contradiction since $srs \notin N_R(w)$.  This proves the inclusion $\Delta_R(w,s) \subseteq  A_R(ws)$ and then the second inclusion. \qedhere
    \end{itemize}
\end{itemize}
\end{proof}

\bigskip

\begin{remark}\label{remark local ascent comparison}
Let $w\in W$, let $s\in A_R(w)$, and let $r\in A_R(ws)$. Set
$I=\{r,s\}$ and write $w=w^Iw_I$ for the right $I$-decomposition
of $w$. The following observations are of interest.

\begin{itemize}

\item[(1)] The condition $r\in A_R(w)$ is equivalent to $w=w^I$.

\item[(2)] The inclusion $A_R(ws)\subseteq A_R(w)$ does not hold in
general. For example, in type $A_4$, if
$w=s_4s_2s_3s_1$ and $s=s_2$, then
$A_R(w)=\{s_2,s_4\}$, whereas
$A_R(ws)=\{s_1,s_3,s_4\}$.

\end{itemize}

These facts follow readily from the right $I$-parabolic decomposition
and the structure of the right weak order on the dihedral coset $wW_I$.
\end{remark}

\bigskip

\subsection{A local formula for $J$-ascent sets}
In this section we study the restriction of the right weak order to ${}^JW$.  Our main goal is to obtain, as in Proposition \ref{ascent set}, an expression of the right $J$-ascent set of $ws \in {}^JW$ in terms of the right $J$-ascent set of $w \in {}^JW$.  This is done in Theorem \ref{J-ascent set}.

\begin{definition}\label{def J-ascent set}
Let $J \subset S$ and let $w \in W$.  
\begin{itemize}
    \item[(1)] We define the right $J$-ascent set of $w$, denoted by $A_R^J(w)$,  as the set of right ascents of $w$ restricted to ${}^JW$, that is
\begin{align*}
A_R^J(w) &:= \{s \in A_R(w)~|~ws \in {}^JW\}.
\end{align*}

\item[(2)] We also define $\Delta_R^J(w,s)$ by
\begin{align*}
\Delta_R^J(w,s)&:=\{r \in S~|~wsr \in  {}^JW,~\ell(srs)=3,~srs \notin N_R(w)\} \\
						& = \Delta_R(w,s) \cap  \{r \in S~|~wsr \in {}^JW\} \\
						& =  \Delta_R(w,s) \cap  \{r \in S~|~rs \neq sr,~wsr \in {}^JW\}.
\end{align*}
The last equality above follows from the fact that the condition $rs \neq sr$ is already taken into account in $\Delta_R(w,s)$. Similarly to (\ref{inclusion Delta in Adj}), we clearly have 
\begin{equation}\label{inclusion Delta_J in Adj}
    \Delta_R^J(w,s) \subseteq \operatorname{Adj}(s).
\end{equation}

\end{itemize}
\end{definition}

\medskip

\begin{theorem}\label{J-ascent set}
Let $w \in  {}^JW$ and $s \in S$ such that $w < ws$ and $ws \in  {}^JW$.  We have
$$
 A_R^J(ws) = \displaystyle\left[ A_R^J(w),s \right] \sqcup \Delta_R^J(w,s).
 $$
\end{theorem}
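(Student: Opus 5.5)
The plan is to intersect both sides of Proposition~\ref{ascent set} with the set $\{r\in S \mid wsr\in {}^JW\}$. By definition, $A_R^J(ws)=A_R(ws)\cap\{r\mid wsr\in{}^JW\}$. Proposition~\ref{ascent set} applies because $w<ws$, and gives $A_R(ws)=[A_R(w),s]\sqcup\Delta_R(w,s)$. Intersecting, the second piece becomes exactly $\Delta_R^J(w,s)$ by Definition~\ref{def J-ascent set}(2), and disjointness is inherited. So everything reduces to the identity
$$
[A_R(w),s]\cap\{r\mid wsr\in{}^JW\}=[A_R^J(w),s].
$$

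For this identity, fix $r\in S$ with $rs=sr$, $r\neq s$, and $r\in A_R(w)$. Then $wsr=wrs$, and since $m_{rs}=2$ both $wr$ and $wrs$ have length $\ell(w)+1$ and $\ell(w)+2$. I need to show that $wr\in{}^JW$ if and only if $wrs\in{}^JW$.

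\emph{Direction $wr\in{}^JW\Rightarrow wrs\in{}^JW$.} Suppose $wrs\notin{}^JW$. Since $wr\in{}^JW$ and $s\in A_R(wr)$, Lemma~\ref{lemma Chin20}(a)(ii) gives $wrs=qwr$ for some $q\in J$. Then $wsr=qwr$, so $ws=qw$. This contradicts $ws\in{}^JW$, because $\ell(qws)=\ell(w)<\ell(ws)$.

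\emph{Direction $wrs\in{}^JW\Rightarrow wr\in{}^JW$.} Here $wr\le wrs$ in the right weak order. If $wr\notin{}^JW$, then Proposition~\ref{stability}(2) applied to $wr\le wrs$ gives $wrs\notin{}^JW$, a contradiction. Equivalently, prefixes of elements of ${}^JW$ lie in ${}^JW$.

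Together, for commuting $r$ the conditions "$r\in A_R^J(w)$" and "$r\in A_R(w)$ with $wsr\in{}^JW$" coincide, which proves the identity and hence the theorem.

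The only step needing care is the first direction, which uses Deodhar's lemma through Lemma~\ref{lemma Chin20}(a)(ii). It works because the swapped left multiplication $ws=qw$ is incompatible with $ws\in{}^JW$. The rest is bookkeeping of the intersection.
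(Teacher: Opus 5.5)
Your proof is correct and follows the paper's strategy: intersect Proposition~\ref{ascent set} with $\{r\in S \mid wsr\in{}^JW\}$, note that the $\Delta$-part becomes $\Delta_R^J(w,s)$, and reduce the commuting part to the equivalence $wr\in{}^JW \iff wsr\in{}^JW$ for $r\neq s$ commuting with $s$. You differ from the paper in two small ways: you prove this equivalence only for $r\in A_R(w)$, which is all that is needed and lets you skip the paper's case $\ell(w_I)=1$; and you get the direction $wr\in{}^JW\Rightarrow wsr\in{}^JW$ directly from Lemma~\ref{lemma Chin20}(a)(ii) via $ws=qw$, whereas the paper uses Proposition~\ref{stability}(1).
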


\begin{proof}
Since $w<ws$, we have
$s\notin A_R(ws)$. Therefore,
$$
A_R^J(ws)
=
A_R(ws)\cap
\{r\in S\setminus\{s\}\mid wsr\in{}^JW\}.
$$

We now consider the decomposition
\begin{align*}
 \{r \in S\setminus\{s\}~|~wsr \in {}^JW\} & = \{r \in S\setminus\{s\}~|~rs = sr,~wsr \in {}^JW\} \sqcup \{r \in S\setminus\{s\}~|~rs \neq sr,~wsr \in {}^JW\} \\
 & := X_R^J(ws) \sqcup Y_R^J(ws).
\end{align*}

We claim now that 
$$
X_R^J(ws)  = \{r \in S\setminus\{s\}~|~rs=sr,~wr \in {}^JW\}.
$$  

Let $r \in X_R^J(ws)$ and assume that $wr \notin {}^JW$.  Let $I = \{s,r\}$, so that $W_I$ is a dihedral group of order four.  Since $w < ws$ and $s \in I$, by (\ref{equ weak para}) the element $w_I$ cannot be the maximal element of $W_I$, and therefore we have two cases to consider: $\ell(w_I) = 0$ or $\ell(w_I) = 1$.

Assume first that $\ell(w_I) = 0$.  Then we have $w < ws < wsr$ and $w < wr < wrs = wsr$.  Since $wr \notin {}^JW$,  we know by Proposition  \ref{stability} (2) that anything larger than $wr$ for the right weak order cannot be in ${}^JW$.  Thus, we must have $wrs \notin {}^JW$, which is false.  

Assume now that $\ell(w_I) = 1$.  It follows that $wr < w <  ws$ and $wr < wsr < ws$.  But then,  once again by Proposition \ref{stability} (2),  if $wr \notin {}^JW$ we cannot have $wsr \in {}^JW$, a contradiction.  Therefore,  in each case we must have $wr \in {}^JW$.  This proves the first inclusion. 

\medskip

We now show the other inclusion.  Let $r \in S \setminus \{s\}$ such that $rs = sr$ and $wr \in {}^JW$.  Let $I = \{s,r\}$. If $\ell(w_I) = 0$ then by Proposition \ref{stability} (1) we have $wW_I \subset {}^JW$ and then $wrs \in {}^JW$.  If $\ell(w_I) = 1$ and if $wsr \notin {}^JW$ then by Proposition \ref{stability} (2), as $wrs < ws$,  we have $ws \notin {}^JW$, which is false.  This proves the claim.

Moreover we have 
\begin{align*}
[A_R^J(w),s]
&=
\{r\in A_R^J(w)\setminus\{s\}\mid rs=sr\}\\
&=
\{r\in A_R(w)\setminus\{s\}\mid
rs=sr,\ wr\in{}^JW\}\\
&=
A_R(w)\cap X_R^J(ws).
\end{align*}

The proof follows from the next computation.

\begin{equation*}
\begin{split}
A_R^J(ws) & = A_R(ws) \cap  \{r \in S\setminus\{s\}~|~wsr \in {}^JW\} \\
				& = A_R(ws) \cap \Bigl( X_R^J(ws) \sqcup Y_R^J(ws) \Bigr) \\
				& = \Bigl(  \displaystyle\left[ A_R(w),s \right]  \sqcup \Delta_R(w,s) \Bigr)  \cap \Bigl( X_R^J(ws) \sqcup Y_R^J(ws) \Bigr)	 \qquad \text{by Prop. }~\ref{ascent set} \\
				& = \Bigl(  \displaystyle\left[ A_R(w),s \right] \cap X_R^J(ws) \Bigr) \sqcup  \Bigl(  \left[ A_R(w),s \right] \cap Y_R^J(ws)\Bigr) \sqcup  \Bigl( \Delta_R(w,s) \cap X_R^J(ws) \Bigr) ~\sqcup \\  
				& \quad \quad \Bigl( \Delta_R(w,s) \cap Y_R^J(ws) \Bigr)  \\
				& = \displaystyle\left[ A_R^J(w),s \right] \sqcup \Bigl(  \displaystyle\left[ A_R(w),s \right] \cap Y_R^J(ws)\Bigr) \sqcup  \Bigl( \Delta_R(w,s) \cap X_R^J(ws) \Bigr) ~\sqcup \\  
				& \quad \quad \Bigl( \Delta_R(w,s) \cap Y_R^J(ws) \Bigr) \\
				& = \displaystyle\left[ A_R^J(w),s \right] \sqcup \Bigl(\Delta_R(w,s) \cap Y_R^J(ws) \Bigr) \\
				& = \displaystyle\left[ A_R^J(w),s \right] \sqcup \Bigl(\Delta_R(w,s) \cap \{r \in S~|~rs \neq sr,~wsr \in {}^JW\} \Bigr) \\
				& = \displaystyle\left[ A_R^J(w),s \right] \sqcup \Delta_R^J(w,s). 
\end{split}
\end{equation*}
\end{proof}

\begin{theorem}\label{corollary structural transition}
Let $(W,S)$ be a Coxeter system, let $J\subseteq S$, let
$w\in{}^JW$, and let $s\in A_R^J(w)$. We have
\begin{itemize}
    \item[(1)] 
    $
A_R^J(ws)
=
\bigl(A_R^J(w)\setminus\{s\}\bigr)
\sqcup
\bigl(\Delta_R^J(w,s)\setminus A_R^J(w)\bigr).
$

\item[(2)] $
A_R^J(w)\setminus A_R^J(ws)=\{s\}.
$ 

\item[(3)] $
\Bigl|
|A_R^J(w)|-|A_R^J(ws)|
\Bigr|
\leq
\max\{1,\deg(s)-1\}.$

\end{itemize}
\end{theorem}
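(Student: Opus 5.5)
The plan is to derive all three parts from Theorem~\ref{J-ascent set}. Its hypotheses hold here: $s\in A_R^J(w)$ means exactly that $w<ws$ and $ws\in{}^JW$. It therefore gives
$$A_R^J(ws)=[A_R^J(w),s]\sqcup\Delta_R^J(w,s).$$
The work for (1) is to compare the right-hand side with $\bigl(A_R^J(w)\setminus\{s\}\bigr)\sqcup\bigl(\Delta_R^J(w,s)\setminus A_R^J(w)\bigr)$. Parts (2) and (3) then follow from (1) by counting.

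\textbf{Key claim for (1).} If $r\in A_R^J(w)$, $r\neq s$ and $rs\neq sr$, then $r\in\Delta_R^J(w,s)$.
\begin{itemize}
\item Set $I=\{r,s\}$. Since both $r$ and $s$ lie in $A_R(w)$, we have $w\in W^I$, so $w\in{}^JW^I$.
\item Since $wr,ws\in{}^JW$, Proposition~\ref{stability}~(1) gives $wW_I\subset{}^JW$. In particular $wsr\in{}^JW$.
\item Because $w=w^I$, the length formula on the coset $wW_I$ gives $\ell(w\,srs)=\ell(w)+3$. Hence $srs\notin N_R(w)$.
\item Also $\ell(srs)=3$ because $m_{rs}\ge 3$. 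So $r\in\Delta_R^J(w,s)$.
\end{itemize}

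\textbf{Proof of (1).}
\begin{itemize}
\item The claim gives $A_R^J(w)\setminus\{s\}\subseteq [A_R^J(w),s]\cup\Delta_R^J(w,s)$. Combined with the trivial decomposition $\Delta_R^J(w,s)=(\Delta_R^J(w,s)\cap A_R^J(w))\sqcup(\Delta_R^J(w,s)\setminus A_R^J(w))$, this yields the inclusion $\supseteq$ in (1).
\item For $\subseteq$: first, $[A_R^J(w),s]\subseteq A_R^J(w)\setminus\{s\}$ by definition of the bracket.
\item Second, $s\notin\Delta_R^J(w,s)$ because $\Delta_R^J(w,s)\subseteq\operatorname{Adj}(s)$ by (\ref{inclusion Delta_J in Adj}). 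Hence $\Delta_R^J(w,s)\cap A_R^J(w)\subseteq A_R^J(w)\setminus\{s\}$.
\item The union in (1) is disjoint by construction.
\end{itemize}

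\textbf{Proof of (2).} Since $w<ws$, $s$ is a right descent of $ws$, so $s\notin A_R^J(ws)$. By (1), every other element of $A_R^J(w)$ lies in $A_R^J(ws)$. Hence $A_R^J(w)\setminus A_R^J(ws)=\{s\}$.

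\textbf{Proof of (3).} Put $k=|\Delta_R^J(w,s)\setminus A_R^J(w)|$. By (1), $|A_R^J(ws)|=|A_R^J(w)|-1+k$. Since $\Delta_R^J(w,s)\subseteq\operatorname{Adj}(s)$, we have $0\le k\le\deg(s)$. Thus $|A_R^J(ws)|-|A_R^J(w)|=k-1$ lies in $[-1,\deg(s)-1]$, and its absolute value is at most $\max\{1,\deg(s)-1\}$.

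The only step needing care is the key claim. The essential input there is that $w$ is minimal in its dihedral coset $wW_I$, which is what lets Proposition~\ref{stability}~(1) apply and gives the length computation for $w\,srs$.
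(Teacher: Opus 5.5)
Your proposal is correct and follows essentially the same route as the paper. Your key claim is exactly the paper's inclusion $B=A_R^J(w)\cap\operatorname{Adj}(s)\subseteq\Delta_R^J(w,s)$, proved the same way, via Proposition~\ref{stability}~(1) and the length additivity $\ell(wsrs)=\ell(w)+3$ for $w\in W^I$; parts (1)--(3) then follow from Theorem~\ref{J-ascent set} by the same set-theoretic bookkeeping and counting.
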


\begin{proof}
(1) Set
$
B:=A_R^J(w)\cap\operatorname{Adj}(s).
$
We first show that $B\subseteq\Delta_R^J(w,s)$. Let $r\in B$ and
set $I=\{r,s\}$. Since $r,s\in A_R^J(w)\subseteq A_R(w)$, we have
$w\in W^I$. Moreover, $w,wr,ws\in{}^JW$. It follows from
Proposition~\ref{stability}~(1) that $wW_I\subseteq{}^JW$, and hence
$wsr\in{}^JW$.
Since $w\in W^I$ and $rs\neq sr$, the word $srs$ is reduced. By
(\ref{equality para lengths}), we have
$
\ell(wsrs)=\ell(w)+\ell(srs)=\ell(w)+3.
$
Thus $srs\notin N_R(w)$, and therefore
$r\in\Delta_R^J(w,s)$. This proves that
$B\subseteq\Delta_R^J(w,s)$.

\medskip

Since $\Delta_R^J(w,s)\subseteq\operatorname{Adj}(s)$ by (\ref{inclusion Delta_J in Adj}), it follows that
$\Delta_R^J(w,s)\cap A_R^J(w) \subseteq B$. However, since $B\subseteq\Delta_R^J(w,s)$ and $B\subseteq A_R^J(w)$ (by definition), it follows that $B\subseteq\Delta_R^J(w,s) \cap A_R^J(w)$.
Therefore, 
$
B = \Delta_R^J(w,s) \cap A_R^J(w).
$
It follows then that
\begin{equation}\label{equa utile}
\Delta_R^J(w,s)
=
B\sqcup
\bigl(\Delta_R^J(w,s)\setminus A_R^J(w)\bigr).
\end{equation}

On the other hand we clearly have
$
A_R^J(w)=[A_R^J(w),s]\sqcup\{s\}\sqcup B,
$
and then it follows that
\begin{equation}\label{equa utile 2}
    A_R^J(w)\setminus\{s\}=[A_R^J(w),s]\sqcup B.
\end{equation}
Therefore
$$
\begin{aligned}
A_R^J(ws)
&=[A_R^J(w),s]\sqcup\Delta_R^J(w,s)\qquad \text{by Theorem}  ~\ref{J-ascent set}\\
&=[A_R^J(w),s]\sqcup B
  \sqcup\bigl(\Delta_R^J(w,s)\setminus A_R^J(w)\bigr) \qquad \text{by} ~ (\ref{equa utile})\\
&=\bigl(A_R^J(w)\setminus\{s\}\bigr)
  \sqcup\bigl(\Delta_R^J(w,s)\setminus A_R^J(w)\bigr) \qquad \text{by} ~ (\ref{equa utile 2}).
\end{aligned}
$$

\medskip

\noindent (2) The decomposition
$
A_R^J(ws)
=
\bigl(A_R^J(w)\setminus\{s\}\bigr)
\sqcup
\bigl(\Delta_R^J(w,s)\setminus A_R^J(w)\bigr)
$
shows that we have
$
A_R^J(w)\setminus\{s\}\subseteq A_R^J(ws),
$
and therefore 
$$
A_R^J(w)\setminus A_R^J(ws) \subseteq \{s\}.
$$
On the other hand, since $s\in A_R^J(w)$ it is clear that $s \notin A_R(ws)$ and a fortiori $s \notin A_R^J(ws)$. Hence $s\in A_R^J(w) \setminus A_R^J(ws)$.

\noindent (3) Using the above equality, we deduce  the following identity
\begin{align*}
    \big|A_R^J(w)\big|-\big|A_R^J(ws)\big| & = \big|A_R^J(w)\big| -\big|\bigl(A_R^J(w)\setminus\{s\}\bigr) \sqcup\bigl(\Delta_R^J(w,s)\setminus A_R^J(w)\bigr)\big| \\
  & =  \big|A_R^J(w)\big| - \big|\bigl(A_R^J(w)\setminus\{s\}\bigr) \big| - \big|\bigl(\Delta_R^J(w,s)\setminus A_R^J(w)\bigr)\big|\\
   & =  \big|A_R^J(w)\big| - \bigl(\big|A_R^J(w)\big| - 1 \bigr) - \big|\bigl(\Delta_R^J(w,s)\setminus A_R^J(w)\bigr)\big|\\
   & =1-\bigl|\Delta_R^J(w,s)\setminus A_R^J(w)\bigr|.
\end{align*}

Finally, the inclusion
$\Delta_R^J(w,s)\setminus A_R^J(w)\subseteq\operatorname{Adj}(s)$
implies
$
0\leq
|\Delta_R^J(w,s)\setminus A_R^J(w)|
\leq\deg(s).
$
Therefore,
$$
1-\deg(s)
\leq
|A_R^J(w)|-|A_R^J(ws)|
\leq1,
$$
and the absolute-value bound follows.
\end{proof}

\medskip

\bigskip

\begin{corollary}\label{corollary degree at most two}
Let $(W,S)$ be a Coxeter system, let $J\subseteq S$, let
$w\in{}^JW$, and let $s\in A_R^J(w)$. Assume that
$\deg(s)\leq 2$. Then
$$
\Bigl|
|A_R^J(w)|-|A_R^J(ws)|
\Bigr|
\leq 1.
$$
More precisely, we have
\begin{itemize}
\item[(i)] If $\operatorname{Adj}(s)=\emptyset$, then
$$
A_R^J(ws) = A_R^J(w) \setminus \{s\}.
$$

\item[(ii)] If $\operatorname{Adj}(s)=\{r\}$, then 
$$
A_R^J(ws)
=
\begin{cases}
A_R^J(w)\setminus\{s\},
&
\text{if } \Delta_R^J(w,s)\setminus A_R^J(w)=\emptyset,
\\[2mm]
\bigl(A_R^J(w)\setminus\{s\}\bigr)\sqcup\{r\}
&
\text{if } \Delta_R^J(w,s)\setminus A_R^J(w) = \{r\}.
\end{cases}
$$

\item[(iii)] If $\operatorname{Adj}(s)=\{r,t\}$, then
$$
A_R^J(ws)
=
\begin{cases}
A_R^J(w)\setminus\{s\},
&
\text{if }
\Delta_R^J(w,s)\setminus A_R^J(w)=\emptyset,
\\[2mm]
\bigl(A_R^J(w)\setminus\{s\}\bigr)\sqcup\{r\},
&
\text{if }
\Delta_R^J(w,s)\setminus A_R^J(w)=\{r\},
\\[2mm]
\bigl(A_R^J(w)\setminus\{s\}\bigr)\sqcup\{t\},
&
\text{if }
\Delta_R^J(w,s)\setminus A_R^J(w)=\{t\},
\\[2mm]
\bigl(A_R^J(w)\setminus\{s\}\bigr)\sqcup\{r,t\},
&
\text{if }
\Delta_R^J(w,s)\setminus A_R^J(w)=\{r,t\}.
\end{cases}
$$
\end{itemize}
\end{corollary}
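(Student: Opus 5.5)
The plan is to read this off Theorem~\ref{corollary structural transition}, with no new Coxeter-theoretic input. The key identity from part~(3) of that theorem is
$$
|A_R^J(w)|-|A_R^J(ws)| = 1-\bigl|\Delta_R^J(w,s)\setminus A_R^J(w)\bigr|,
$$
together with the inclusion $\Delta_R^J(w,s)\setminus A_R^J(w)\subseteq \operatorname{Adj}(s)$ coming from (\ref{inclusion Delta_J in Adj}). When $\deg(s)\leq 2$, the set $D:=\Delta_R^J(w,s)\setminus A_R^J(w)$ therefore has at most two elements. The difference $|A_R^J(w)|-|A_R^J(ws)|$ then lies in $\{1,0,-1\}$, which gives the bound $\leq 1$. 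This agrees with $\max\{1,\deg(s)-1\}=1$ in Theorem~\ref{corollary structural transition}~(3).

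For the refined description, I will start from part~(1) of Theorem~\ref{corollary structural transition}:
$$
A_R^J(ws)=\bigl(A_R^J(w)\setminus\{s\}\bigr)\sqcup D .
$$
The cases then follow by listing the subsets of $\operatorname{Adj}(s)$.
\begin{itemize}
\item[(i)] If $\operatorname{Adj}(s)=\emptyset$, then $D=\emptyset$, and we get $A_R^J(ws)=A_R^J(w)\setminus\{s\}$.
\item[(ii)] If $\operatorname{Adj}(s)=\{r\}$, then $D$ is either $\emptyset$ or $\{r\}$, and each choice gives the corresponding line of the formula.
\item[(iii)] If $\operatorname{Adj}(s)=\{r,t\}$, then $D$ is one of the four subsets $\emptyset$, $\{r\}$, $\{t\}$, $\{r,t\}$. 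Substituting each into the displayed decomposition gives the four listed cases.
\end{itemize}
The unions are disjoint because $D$ is disjoint from $A_R^J(w)$ by construction.

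I do not expect a real obstacle here. The only point to handle carefully is justifying that $D\subseteq\operatorname{Adj}(s)$, and this follows directly from $\Delta_R^J(w,s)\subseteq\Delta_R(w,s)\subseteq\operatorname{Adj}(s)$.
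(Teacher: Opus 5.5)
Your proposal is correct and follows the paper's proof essentially verbatim. You get the bound from Theorem~\ref{corollary structural transition}~(3); using the identity $|A_R^J(w)|-|A_R^J(ws)|=1-|D|$ from its proof is equivalent to the paper's remark that $\max\{1,\deg(s)-1\}=1$. The case list comes, as in the paper, from substituting the subsets of $\operatorname{Adj}(s)$ for $D$ in part~(1), using $\Delta_R^J(w,s)\subseteq\operatorname{Adj}(s)$.
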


\begin{proof}
The first assertion follows immediately from
Theorem~\ref{corollary structural transition} (3), since
$
\max\{1,\deg(s)-1\}=1
$
whenever $\deg(s)\leq 2$.

For the three cases, we use the formula given in Theorem \ref{corollary structural transition} (1), namely
$$
A_R^J(ws)
=
\bigl(A_R^J(w)\setminus\{s\}\bigr)
\sqcup
\bigl(\Delta_R^J(w,s)\setminus A_R^J(w)\bigr),
$$
and the fact that 
$$
\Delta_R^J(w,s) \subseteq \operatorname{Adj}(s).
$$
\end{proof}

\begin{figure}[h!]
\begin{center}
\begin{tikzpicture}

\node at (0,13.5) (231242312) {$231242312$};

\node at (0,12) (23124231) {$23124231$};

\node at (3,10.5) (2412321) {$2412321$};
\node at (0,10.5) (2312421) {$2312421$};
\node at (-3,10.5) (2312423) {$2312423$};

\node at (3,9) (243121) {$243121$};
\node at (0,9) (241232) {$241232$};
\node at (-3,9) (231242) {$231242$};

\node at (4.5,7.5) (24321) {$24321$};
\node at (1.5,7.5) (24312) {$24312$};
\node at (-1.5,7.5) (24123) {$24123$};
\node at (-4.5,7.5) (23124) {$23124$};

\node at (4.5,6) (2432) {$2432$};
\node at (1.5,6) (2431) {$2431$};
\node at (-1.5,6) (2412) {$2412$};
\node at (-4.5,6) (2312) {$2312$};

\node at (3,4.5) (243) {$243$};
\node at (0,4.5) (241) {$241$};
\node at (-3,4.5) (231) {$231$};

\node at (3,3) (24) {$24$};
\node at (0,3) (23) {$23$};
\node at (-3,3) (21) {$21$};

\node at (0,1.5) (2) {$2$};

\node at (0,0) (0) {$e$};

\draw[line width=0.4mm,-{Stealth[scale=0.7]}]  (23124231) to (231242312);

\draw[line width=0.4mm,-{Stealth[scale=0.7]}]  (2412321) to (23124231);
\draw[line width=0.4mm,-{Stealth[scale=0.7]}]  (2312421) to (23124231);
\draw[line width=0.4mm,-{Stealth[scale=0.7]}]  (2312423) to (23124231);

\draw[line width=0.4mm,-{Stealth[scale=0.7]}]  (243121) to (2412321);
\draw[line width=0.4mm,-{Stealth[scale=0.7]}]  (243121) to (2312421);
\draw[line width=0.4mm,-{Stealth[scale=0.7]}]  (241232) to (2412321);
\draw[line width=0.4mm,-{Stealth[scale=0.7]}]  (241232) to (2312423);
\draw[line width=0.4mm,-{Stealth[scale=0.7]}]  (231242) to (2312421);
\draw[line width=0.4mm,-{Stealth[scale=0.7]}]  (231242) to (2312423);

\draw[line width=0.4mm,-{Stealth[scale=0.7]}]  (24321) to (243121);
\draw[line width=0.4mm,-{Stealth[scale=0.7]}]  (24312) to (243121);
\draw[line width=0.4mm,-{Stealth[scale=0.7]}]  (24312) to (241232);
\draw[line width=0.4mm,-{Stealth[scale=0.7]}]  (24312) to (231242);
\draw[line width=0.4mm,-{Stealth[scale=0.7]}]  (24123) to (241232);
\draw[line width=0.4mm,-{Stealth[scale=0.7]}]  (23124) to (231242);

\draw[line width=0.4mm,-{Stealth[scale=0.7]}]  (2432) to (24321);
\draw[line width=0.4mm,-{Stealth[scale=0.7]}]  (2431) to (24312);
\draw[line width=0.4mm,-{Stealth[scale=0.7]}]  (2412) to (24123);
\draw[line width=0.4mm,-{Stealth[scale=0.7]}]  (2312) to (23124);

\draw[line width=0.4mm,-{Stealth[scale=0.7]}]  (243) to (2432);
\draw[line width=0.4mm,-{Stealth[scale=0.7]}]  (243) to (2431);
\draw[line width=0.4mm,-{Stealth[scale=0.7]}]  (241) to (2431);
\draw[line width=0.4mm,-{Stealth[scale=0.7]}]  (241) to (2412);
\draw[line width=0.4mm,-{Stealth[scale=0.7]}]  (231) to (2431);
\draw[line width=0.4mm,-{Stealth[scale=0.7]}] (231) to (2312);

\draw[line width=0.4mm,-{Stealth[scale=0.7]}]  (24) to (243);
\draw[line width=0.4mm,-{Stealth[scale=0.7]}]  (24) to (241);
\draw[line width=0.4mm,-{Stealth[scale=0.7]}]  (23) to (243);
\draw[line width=0.4mm,-{Stealth[scale=0.7]}]  (23) to (231);
\draw[line width=0.4mm,-{Stealth[scale=0.7]}]  (21) to (241);
\draw[line width=0.4mm,-{Stealth[scale=0.7]}] (21) to (231);

\draw[line width=0.4mm,-{Stealth[scale=0.7]}]  (2) to (24);
\draw[line width=0.4mm,-{Stealth[scale=0.7]}]  (2) to (23);
\draw[line width=0.4mm,-{Stealth[scale=0.7]}] (2) to (21);

\draw [line width=0.4mm,-{Stealth[scale=0.7]}]  (0) to (2);

\node[scale=0.85] at (0.2,12.75)  {$\textcolor{red}{2}$};

\node[scale=0.85] at (1.6,11.4)  {$\textcolor{red}{4}$};
\node[scale=0.85] at (0.2,11.2)  {$\textcolor{red}{3}$};
\node[scale=0.85] at (-1.6,11.4)  {$\textcolor{red}{1}$};

\node[scale=0.85] at (3.2,9.75)  {$\textcolor{red}{3}$};
\node[scale=0.85] at (1.9,9.35)  {$\textcolor{red}{4}$};
\node[scale=0.85] at (0.6,9.55)  {$\textcolor{red}{1}$};
\node[scale=0.85] at (-0.6,9.55)  {$\textcolor{red}{4}$};
\node[scale=0.85] at (-1.9,9.35)  {$\textcolor{red}{1}$};
\node[scale=0.85] at (-3.2,9.75)  {$\textcolor{red}{3}$};

\node[scale=0.85] at (4,8.3)  {$\textcolor{red}{2}$};
\node[scale=0.85] at (2,8.3)  {$\textcolor{red}{1}$};
\node[scale=0.85] at (1,8.3)  {$\textcolor{red}{3}$};
\node[scale=0.85] at (-0.85,7.9)  {$\textcolor{red}{2}$};
\node[scale=0.85] at (-1.7,8.35)  {$\textcolor{red}{4}$};
\node[scale=0.85] at (-4,8.3)  {$\textcolor{red}{2}$};

\node[scale=0.85] at (4.7,6.75)  {$\textcolor{red}{1}$};
\node[scale=0.85] at (1.7,6.75)  {$\textcolor{red}{2}$};
\node[scale=0.85] at (-1.7,6.75)  {$\textcolor{red}{3}$};
\node[scale=0.85] at (-4.7,6.75)  {$\textcolor{red}{4}$};

\node[scale=0.85] at (3.9,5.1)  {$\textcolor{red}{2}$};
\node[scale=0.85] at (2.1,5.1)  {$\textcolor{red}{1}$};
\node[scale=0.85] at (0.9,5.1)  {$\textcolor{red}{3}$};
\node[scale=0.85] at (-0.7,4.9)  {$\textcolor{red}{2}$};
\node[scale=0.85] at (-1.9,5.1)  {$\textcolor{red}{4}$};
\node[scale=0.85] at (-3.9,5.1)  {$\textcolor{red}{2}$};

\node[scale=0.85] at (3.2,3.75)  {$\textcolor{red}{3}$};
\node[scale=0.85] at (1.9,3.35)  {$\textcolor{red}{1}$};
\node[scale=0.85] at (0.6,3.55)  {$\textcolor{red}{4}$};
\node[scale=0.85] at (-0.6,3.55)  {$\textcolor{red}{1}$};
\node[scale=0.85] at (-1.9,3.35)  {$\textcolor{red}{4}$};
\node[scale=0.85] at (-3.2,3.75)  {$\textcolor{red}{3}$};

\node[scale=0.85] at (1.5,2)  {$\textcolor{red}{4}$};
\node[scale=0.85] at (0.2,2.25)  {$\textcolor{red}{3}$};
\node[scale=0.85] at (-1.5,2)  {$\textcolor{red}{1}$};

\node[scale=0.85] at (0.2,0.75)  {$\textcolor{red}{2}$};

\end{tikzpicture}
\end{center}
\caption{Hasse diagram of the right weak order for ${}^JW$ with $W=D_4$ and $J = \{s_1,s_3,s_4\}$ where $s_2$ is the central node of the Dynkin diagram of $W$.  For the bottom edge
$e\longrightarrow s_2$, we have
$
A_R^J(e)=\{s_2\}
$
and
$
A_R^J(s_2)=\{s_1,s_3,s_4\}.
$
Thus right multiplication by $s_2$ removes $s_2$ and introduces its
three neighbours:
$
\Delta_R^J(e,s_2)\setminus A_R^J(e)=\{s_1,s_3,s_4\}.
$
In particular, the bound in
Theorem~\ref{corollary structural transition} is attained, since
$
\bigl||A_R^J(e)|-|A_R^J(s_2)|\bigr|
=\deg(s_2)-1 = 2.
$
}
\end{figure}

\bigskip

\bibliographystyle{plain}
\bibliography{J_ascent_sets_minimal.bib}

\bigskip

\small{
 \noindent \textsc{Nathan Chapelier-Laget}\\
 Université du Littoral Côte d'Opale\\
 \textit{Email address:} \texttt{nathan.chapelier@univ-littoral.fr}
}
\end{document}